\newtheorem*{multitheorem}{\variable@name}
\theoremstyle{definition}
\newcommand{\variable@name}{Theorem}
\newtheorem*{multiproclaim}{\variable@name}
\theoremstyle{plain}
\newtheorem{thm}{Theorem}
\newtheorem{prop}[thm]{Proposition}
\newtheorem{lem}[thm]{Lemma}
\newtheorem{cor}[thm]{Corollary}
\theoremstyle{definition}
\newtheorem{rem}[thm]{Remark}
\begin{document}
\title{Rational values of powers of trigonometric functions}
\author{Genki Shibukawa}
\date{
\small MSC classes\,:\,11J72, 11R18, 33B10}
\pagestyle{plain}

\maketitle

\begin{abstract}
We extend the theorem by Olmsted (1945) and Carlitz-Thomas (1963) on rational values of trigonometric functions to powers of trigonometric functions. 
\end{abstract}

\section{Introduction}
Throughout the paper, we denote the ring of rational numbers by $\mathbb{Q}$, the ring of real numbers by $\mathbb{R}$, the set of positive rational numbers by $\mathbb{Q}_{>0}$ and a $m$th root of unity by $\zeta_{m}:=e^{\frac{2\pi \sqrt{-1}}{m}}$. 
Olmsted \cite{O} and Carlitz-Thomas \cite{CT} determined all rational values of trigonometric functions. 
\begin{thm}[Olmsted (1945), Carlitz-Thomas (1963)]
\label{thm:main thm}
If $\theta \in \mathbb{Q}$, then the only possible rational values of the trigonometric functions are: 
$$
\sin {(\pi \theta )},\,\cos {(\pi \theta )} =0,\,\pm \frac{1}{2}, \,\pm1 \,\, ; \,\, \tan {(\pi \theta )}=0, \,\pm1.
$$
\end{thm}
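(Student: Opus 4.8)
The plan is to reduce all three functions to the single statement about $\cos(\pi\theta)$ and then exploit the fact that, for rational $\theta$, the number $2\cos(\pi\theta)$ is an algebraic integer. Writing $\theta = p/q$ in lowest terms and setting $\alpha := \pi\theta$, the complex number $\zeta := e^{\sqrt{-1}\,\alpha}$ is a $2q$th root of unity, hence an algebraic integer; therefore $2\cos\alpha = \zeta + \zeta^{-1}$ is a sum of algebraic integers and so is itself an algebraic integer. Equivalently, and more self-containedly, I would invoke the Chebyshev recursion: there is a monic polynomial $f_n \in \mathbb{Z}[x]$ with $2\cos(n\phi) = f_n(2\cos\phi)$ for all $\phi$. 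Since $q\alpha = p\pi$ gives $2\cos(q\alpha) = 2(-1)^p = \pm 2$, the number $x_0 := 2\cos\alpha$ is a root of the monic integer polynomial $f_q(x) \mp 2$, confirming directly that it is an algebraic integer.

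The cosine case then follows quickly. If $\cos(\pi\theta) \in \mathbb{Q}$, then $2\cos(\pi\theta)$ is a rational algebraic integer, hence an ordinary integer by the rational root theorem. Since $|2\cos(\pi\theta)| \le 2$, the only possibilities are $2\cos(\pi\theta) \in \{0, \pm 1, \pm 2\}$, giving $\cos(\pi\theta) \in \{0, \pm\tfrac12, \pm 1\}$ as claimed.

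For the sine, I would use the complementary-angle identity $\sin(\pi\theta) = \cos\bigl(\pi(\tfrac12 - \theta)\bigr)$; since $\tfrac12 - \theta \in \mathbb{Q}$ whenever $\theta \in \mathbb{Q}$, the sine statement is an immediate corollary of the cosine statement. For the tangent, suppose $\tan(\pi\theta) = r \in \mathbb{Q}$. The double-angle identity $\cos(2\pi\theta) = \frac{1-r^2}{1+r^2}$ shows $\cos(2\pi\theta)$ is rational, and since $2\theta \in \mathbb{Q}$ the cosine result forces $\frac{1-r^2}{1+r^2} \in \{0, \pm\tfrac12, \pm 1\}$. Solving each of these five equations for rational $r$ eliminates the values $\pm\tfrac12$ (which give $r^2 = \tfrac13$ or $r^2 = 3$) and $-1$ (which is inconsistent), leaving only $r \in \{0, \pm 1\}$.

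The substantive step is the algebraic-integer property of $2\cos(\pi\theta)$ together with the fact that a rational algebraic integer must be a rational integer; everything else is a routine reduction via trigonometric identities. I would establish the monic-polynomial identity by the standard induction from the relation $2\cos((n+1)\phi) = (2\cos\phi)\,(2\cos(n\phi)) - 2\cos((n-1)\phi)$, which makes the monic integrality transparent and keeps the whole argument elementary. I expect the only delicate point to be ensuring the boundedness $|2\cos(\pi\theta)| \le 2$ is combined correctly with integrality, since it is precisely this combination that pins down the finite list of admissible values.
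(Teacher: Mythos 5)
Your proposal is correct and complete, but note that the paper contains no proof of Theorem~\ref{thm:main thm} at all: it is quoted as a known result with citations to Olmsted and Carlitz--Thomas, so there is no internal argument to compare against. What you give is essentially the classical proof underlying those references (often packaged as Niven's theorem): the recursion $2\cos((n+1)\phi)=(2\cos\phi)\,(2\cos(n\phi))-2\cos((n-1)\phi)$ yields monic $f_n\in\mathbb{Z}[x]$ with $f_n(2\cos\phi)=2\cos(n\phi)$, so for $\theta=p/q$ the number $2\cos(\pi\theta)$ is a root of the monic integer polynomial $f_q(x)-2(-1)^p$, hence an algebraic integer; rationality then forces $2\cos(\pi\theta)\in\{0,\pm1,\pm2\}$ via the rational root theorem and the bound $|2\cos(\pi\theta)|\le 2$. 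Your reductions are also sound: the sine case via $\sin(\pi\theta)=\cos\bigl(\pi(\tfrac12-\theta)\bigr)$, and the tangent case via $\cos(2\pi\theta)=\frac{1-r^2}{1+r^2}$, where your five-case check is right (the values $\pm\tfrac12$ give the irrational $r^2=\tfrac13$ or $r^2=3$, and $-1$ is vacuous since it would require $1=-1$). It is worth observing that your elementary route is independent of the cyclotomic--Galois machinery the paper develops for Theorems~\ref{thm:gen main thm} and \ref{thm:gen main thm 2}; a proof in the paper's own spirit would instead write $2\cos(\pi\theta)=\zeta+\zeta^{-1}$ for a root of unity $\zeta$ of some order $m$ and use $[\mathbb{Q}(\zeta+\zeta^{-1}):\mathbb{Q}]=\varphi(m)/2$ for $m>2$, so rationality forces $\varphi(m)\le 2$, i.e.\ $m\in\{1,2,3,4,6\}$, recovering the same list $\{0,\pm1,\pm2\}$ for $\zeta+\zeta^{-1}$. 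Your version buys self-containedness and avoids any field theory beyond the rational root theorem; the cyclotomic version buys uniformity with the degree and Galois-group arguments used elsewhere in the paper.
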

By this Theorem\,\ref{thm:main thm} and well-known facts
$$
\cos{(\pi \theta )}^{2}
   =
   \frac{1+\cos{(2\pi \theta )}}{2}, \quad 
   \tan{(\pi \theta )}^{2}
   =
   \frac{1}{\cos{(\pi \theta )}^{2}}-1,
$$
we have the following result immediately. 
\begin{cor}
\label{thm:square cos}
If $\theta \in \mathbb{Q}$ and $\cos{(\pi \theta )}^{2} \in \mathbb{Q}$, then the only possible values of the trigonometric functions are: 
$$
\sin {(\pi \theta )},\,\cos {(\pi \theta )} =0, \, \pm \frac{1}{2}, \, \pm \frac{1}{\sqrt{2}}, \, \pm \frac{\sqrt{3}}{2}, \, \pm1 
\,\, ; \,\, \tan {(\pi \theta )}=0, \,\, \pm \frac{1}{\sqrt{3}}, \,\, \pm 1, \,\, \pm \sqrt{3}.
$$
\end{cor}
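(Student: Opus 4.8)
The plan is to reduce the hypothesis on $\cos{(\pi \theta)}^2$ to a rationality statement about the doubled angle, to which Theorem~\ref{thm:main thm} applies directly. First I would rewrite the first displayed identity as $\cos{(2\pi \theta)} = 2\cos{(\pi \theta)}^2 - 1$; since $\cos{(\pi \theta)}^2 \in \mathbb{Q}$ by hypothesis, this forces $\cos{(2\pi \theta)} \in \mathbb{Q}$. Because $\theta \in \mathbb{Q}$ implies $2\theta \in \mathbb{Q}$, I can invoke Theorem~\ref{thm:main thm} with the rational argument $2\theta$ in place of $\theta$, concluding that $\cos{(2\pi \theta)} \in \{0, \pm\tfrac12, \pm 1\}$.

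The second step is to feed these five possibilities back through $\cos{(\pi \theta)}^2 = \tfrac{1+\cos{(2\pi \theta)}}{2}$. This yields exactly $\cos{(\pi \theta)}^2 \in \{0, \tfrac14, \tfrac12, \tfrac34, 1\}$, hence $\cos{(\pi \theta)} \in \{0, \pm\tfrac12, \pm\tfrac{1}{\sqrt2}, \pm\tfrac{\sqrt3}{2}, \pm1\}$, which is the claimed list. The sine values then follow at once from $\sin{(\pi \theta)}^2 = 1 - \cos{(\pi \theta)}^2$: the right-hand side ranges over the same set $\{0, \tfrac14, \tfrac12, \tfrac34, 1\}$, so the admissible values of $\sin{(\pi \theta)}$ coincide with those of $\cos{(\pi \theta)}$.

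For the tangent I would use the second displayed identity in the form $\tan{(\pi \theta)}^2 = \sin{(\pi \theta)}^2 / \cos{(\pi \theta)}^2$, evaluated case by case. Discarding $\cos{(\pi \theta)}^2 = 0$, where the tangent is undefined, the four remaining values $\tfrac14, \tfrac12, \tfrac34, 1$ give $\tan{(\pi \theta)}^2 \in \{3, 1, \tfrac13, 0\}$ and therefore $\tan{(\pi \theta)} \in \{0, \pm\tfrac{1}{\sqrt3}, \pm1, \pm\sqrt3\}$, exactly as stated.

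Since every step is a finite case check driven by Theorem~\ref{thm:main thm}, I do not anticipate a genuine obstacle. The only points demanding care are the correct exclusion of the case $\cos{(\pi \theta)} = 0$ from the tangent list, and the verification that the sine and cosine value sets genuinely coincide rather than merely overlap; both are resolved by noting that $x \mapsto 1-x$ permutes the set $\{0, \tfrac14, \tfrac12, \tfrac34, 1\}$.
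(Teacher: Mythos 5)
Your proposal is correct and is exactly the argument the paper intends: the corollary is stated as an immediate consequence of Theorem~\ref{thm:main thm} via the identities $\cos{(\pi\theta)}^{2}=\frac{1+\cos{(2\pi\theta)}}{2}$ and $\tan{(\pi\theta)}^{2}=\frac{1}{\cos{(\pi\theta)}^{2}}-1$, and your write-up simply fleshes out that finite case check (your use of $\tan^{2}=\sin^{2}/\cos^{2}$ is the same identity in disguise). No discrepancies to report.
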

In this note, we prove Theorem\,\ref{thm:gen main thm} and Theorem\,\ref{thm:gen main thm 2}, which are generalizations of Theorem\,\ref{thm:main thm} and Corollary\,\ref{thm:square cos}. 
\begin{thm}
\label{thm:gen main thm}
If $N\geq 3$ and $\alpha $ is a positive rational number such that $\alpha^{\frac{1}{N}},\ldots,\alpha^{\frac{N-1}{N}}\not\in \mathbb{Q}$, then for any positive integer $m$, we have
$$
\sqrt[N]{\alpha }\not\in \mathbb{Q}(\zeta_{m}). 
$$
In particular, there is no $\theta \in \mathbb{Q}$ such that $\cos{(\pi \theta )},\cos{(\pi \theta )}^{2},\ldots ,\cos{(\pi \theta )}^{N-1} \not\in \mathbb{Q}$ and $\cos{(\pi \theta )}^{N} \in \mathbb{Q}$ (resp. $\tan{(\pi \theta )},\tan{(\pi \theta )}^{2},\ldots ,\tan{(\pi \theta )}^{N-1} \not\in \mathbb{Q}$ and $\tan{(\pi \theta )}^{N} \in \mathbb{Q}$).
\end{thm}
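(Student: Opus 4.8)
The plan is to establish the field-theoretic assertion first and then read off the trigonometric consequence. Write $\beta := \sqrt[N]{\alpha}$ for the positive real $N$th root, so $\beta \in \mathbb{R}$ and $\mathbb{Q}(\beta) \subseteq \mathbb{R}$. Suppose, toward a contradiction, that $\beta \in \mathbb{Q}(\zeta_{m})$ for some $m$. Since $\mathbb{Q}(\zeta_{m})/\mathbb{Q}$ is abelian, every intermediate field is the fixed field of a subgroup, which is automatically normal; hence $\mathbb{Q}(\beta)/\mathbb{Q}$ is Galois. The guiding idea is that a real radical extension of this kind cannot be Galois once $N \ge 3$, because the remaining $N$th roots of $\alpha$ are genuinely non-real.

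To make this precise, let $f \in \mathbb{Q}[x]$ be the minimal polynomial of $\beta$. Because $\mathbb{Q}(\beta)/\mathbb{Q}$ is normal and $\beta$ is a root of $f$ lying in $\mathbb{Q}(\beta)$, the polynomial $f$ splits completely in $\mathbb{Q}(\beta) \subseteq \mathbb{R}$, so all of its roots are real. On the other hand $f \mid x^{N}-\alpha$, so every root of $f$ lies in $\{\zeta_{N}^{j}\beta : 0 \le j \le N-1\}$; among these the only real numbers are $\beta$ (if $N$ is odd) and $\pm\beta$ (if $N$ is even), since $\zeta_{N}^{j} \in \mathbb{R}$ forces $\zeta_{N}^{j} = \pm1$ while $\zeta_{N} \notin \mathbb{R}$ for $N \ge 3$. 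As $f$ is separable (characteristic $0$) and has $\beta$ as a root, the only possibilities are $f = x-\beta$ or $f = (x-\beta)(x+\beta) = x^{2}-\beta^{2}$. The first forces $\beta = \alpha^{1/N} \in \mathbb{Q}$, contradicting the case $k=1$ of the hypothesis; the second forces $\beta^{2} = \alpha^{2/N} \in \mathbb{Q}$, contradicting the case $k=2$ (which is available precisely because $N \ge 3$). Hence $\beta \notin \mathbb{Q}(\zeta_{m})$.

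For the trigonometric statement, recall that for $\theta = a/b \in \mathbb{Q}$ one has $\cos(\pi\theta) = \tfrac12(\zeta_{2b}^{a}+\zeta_{2b}^{-a})$, and likewise $\tan(\pi\theta) \in \mathbb{Q}(\zeta_{2b},\sqrt{-1})$; in every case $\cos(\pi\theta),\tan(\pi\theta) \in \mathbb{Q}(\zeta_{m})$ for a suitable $m$. Suppose $c := \cos(\pi\theta)$ satisfies $c,c^{2},\ldots,c^{N-1}\notin\mathbb{Q}$ and $c^{N}\in\mathbb{Q}$. Then $c \ne 0$, and I set $\alpha := |c|^{N} = |c^{N}| \in \mathbb{Q}_{>0}$, so that $\sqrt[N]{\alpha} = |c|$. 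Since $c \in \mathbb{Q}(\zeta_{m})$ is real, $|c| = \pm c \in \mathbb{Q}(\zeta_{m})$; moreover $\alpha^{k/N} = |c|^{k} = |c^{k}| \notin \mathbb{Q}$ for $1 \le k \le N-1$, for otherwise $c^{k} = \pm|c^{k}| \in \mathbb{Q}$. Thus $\alpha$ satisfies the hypotheses while $\sqrt[N]{\alpha} \in \mathbb{Q}(\zeta_{m})$, contradicting the assertion just proved; the same argument with $t := \tan(\pi\theta)$ replacing $c$ disposes of the tangent case.

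The only delicate points I anticipate are organizational rather than conceptual: confirming that a subfield of an abelian extension is itself Galois over $\mathbb{Q}$ (immediate from the Galois correspondence, as subgroups of abelian groups are normal), and the passage from ``$c^{N}\in\mathbb{Q}$'' to a positive rational $\alpha$ with $\sqrt[N]{\alpha}=|c|$, where the sign of $c$ must be absorbed into absolute values so that the theorem applies to the positive real root. It is worth recording that, although the hypothesis is stated for all $k=1,\ldots,N-1$, the argument uses only $k=1$ and $k=2$; the full strength of the hypothesis (equivalently, irreducibility of $x^{N}-\alpha$) is not needed for this implication, and the role of $N \ge 3$ is exactly to guarantee $\zeta_{N}\notin\mathbb{R}$, which is what renders the real radical extension non-normal.
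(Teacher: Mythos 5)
Your proof is correct, and it takes a genuinely different --- and more elementary --- route than the paper's. The paper adjoins $\zeta_{N}$ to form $K=\mathbb{Q}(\sqrt[N]{\alpha},\zeta_{N})$, shows via a Kummer-type computation (its Proposition~4 and Lemma~5, treating separately an odd prime $p\mid N$ and the case $4\mid N$) that $\mathrm{Gal}(K/\mathbb{Q})$ is non-abelian, and derives the contradiction from $K\subseteq \mathbb{Q}(\zeta_{mN})$, since every subfield of a cyclotomic field is abelian over $\mathbb{Q}$. You instead use abelianness only through the fact that subgroups of abelian groups are normal, so $\mathbb{Q}(\beta)\subseteq\mathbb{Q}(\zeta_{m})$ would be Galois over $\mathbb{Q}$, and then exploit realness: the minimal polynomial of $\beta=\sqrt[N]{\alpha}$ must split in $\mathbb{Q}(\beta)\subseteq\mathbb{R}$, yet its roots lie among $\zeta_{N}^{j}\beta$, of which only $\pm\beta$ are real for $N\geq 3$; hence $[\mathbb{Q}(\beta):\mathbb{Q}]\leq 2$, contradicting $\alpha^{1/N},\alpha^{2/N}\notin\mathbb{Q}$. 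This buys several things: you need no irreducibility of $x^{N}-\alpha$ over $\mathbb{Q}$ (the paper's Proposition~4) or over $\mathbb{Q}(\zeta_{p})$, no explicit Galois group, and --- as you rightly record --- only the cases $k=1,2$ of the hypothesis, so your argument proves a formally stronger statement under a weaker assumption. You are also more careful than the paper on one point: in the trigonometric application the paper simply posits $\cos(\pi\theta)=\sqrt[N]{\alpha}$ with $\alpha\in\mathbb{Q}_{>0}$, tacitly ignoring the possibility of negative $\cos(\pi\theta)$ with $N$ odd (where $\cos(\pi\theta)^{N}<0$); your passage to $c\mapsto |c|$, $\alpha:=|c|^{N}$, with $|c|=\pm c\in\mathbb{Q}(\zeta_{m})$ and $|c^{k}|\notin\mathbb{Q}$, closes that small gap cleanly. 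What the paper's heavier machinery buys in exchange is structural information --- the explicit semidirect product $\mathbb{Z}/p\mathbb{Z}\rtimes(\mathbb{Z}/p\mathbb{Z})^{\times}$ and the non-abelian subextension $\mathbb{Q}(\sqrt[p]{\alpha},\zeta_{p})$ --- which is of independent interest but is not needed for the theorem itself, whereas your real-roots argument is confined to real radicals (it would not rule out non-real elements of small degree), which is precisely the setting of the theorem.
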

\begin{thm}
\label{thm:gen main thm 2}
If there exists a positive integer $n$ and $\theta \in \mathbb{Q}$ such that $\cos{(\pi \theta )}^{n} \in \mathbb{Q}$ (resp. $\tan{(\pi \theta )}^{n} \in \mathbb{Q}$), then the only possible values of the trigonometric functions are: 
\begin{equation}
\label{eq:cos sin results}
\sin {(\pi \theta )},\,\cos {(\pi \theta )}
=
\begin{cases}
0, \,\, \pm \frac{1}{2}, \,\, \pm 1 & (n:\text{odd}) \\
0, \,\, \pm \frac{1}{\sqrt{2}}, \,\, \pm \frac{1}{2}, \,\, \pm \frac{\sqrt{3}}{2}, \,\, \pm 1 & (n:\text{even})
\end{cases}.
\end{equation}
resp. 
\begin{equation}
\label{eq:tan result}
\tan {(\pi \theta )}
=
\begin{cases}
0, \,\, \pm 1 & (n:\text{odd}) \\
0, \,\, \pm \frac{1}{\sqrt{3}}, \,\, \pm 1, \,\, \pm \sqrt{3} & (n:\text{even})
\end{cases}.
\end{equation}
\end{thm}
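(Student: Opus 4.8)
The plan is to reduce the whole statement to the two base cases already settled by Theorem~\ref{thm:main thm} and Corollary~\ref{thm:square cos}, the reduction being organized around the \emph{minimal} exponent for which a power becomes rational. I treat the cosine assertion first. Write $c=\cos(\pi\theta)$ and assume $c\neq 0$ (if $c=0$ the value lies in both lists). Since $c^{n}\in\mathbb{Q}$, there is a least positive integer $N$ with $c^{N}\in\mathbb{Q}$. A first routine observation is that $N\mid n$: writing $n=qN+r$ with $0\le r<N$ gives $c^{r}=c^{n}(c^{N})^{-q}\in\mathbb{Q}$ (legitimate because $c\neq0$), so minimality forces $r=0$.

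The crucial step is to bound $N\le 2$. If instead $N\ge 3$, then by minimality of $N$ we would have $c,c^{2},\dots,c^{N-1}\notin\mathbb{Q}$ together with $c^{N}\in\mathbb{Q}$, which is exactly the configuration excluded by the ``in particular'' clause of Theorem~\ref{thm:gen main thm}. Hence $N\in\{1,2\}$. This is the only point at which the genuinely new input of the paper is used; everything that follows is bookkeeping about parity and divisibility.

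With $N\le 2$ in hand I split on the parity of $n$. If $n$ is odd, then $N\mid n$ forces $N$ odd, so $N=1$ and $c\in\mathbb{Q}$; Theorem~\ref{thm:main thm} then gives $c\in\{0,\pm\tfrac12,\pm1\}$, as in \eqref{eq:cos sin results}. If $n$ is even, then whether $N=1$ or $N=2$ we have $c^{2}\in\mathbb{Q}$, and Corollary~\ref{thm:square cos} yields $c\in\{0,\pm\tfrac1{\sqrt2},\pm\tfrac12,\pm\tfrac{\sqrt3}2,\pm1\}$. The corresponding statement for $\sin(\pi\theta)$—namely that $\sin(\pi\theta)^{n}\in\mathbb{Q}$ confines $\sin(\pi\theta)$ to the same set—follows at once by applying the cosine result to $\theta'=\tfrac12-\theta\in\mathbb{Q}$, since $\sin(\pi\theta)=\cos(\pi\theta')$.

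For the tangent assertion \eqref{eq:tan result} I would argue identically with $t=\tan(\pi\theta)$ (defined and nonzero): the least $N$ with $t^{N}\in\mathbb{Q}$ satisfies $N\mid n$, and the tangent half of the ``in particular'' clause of Theorem~\ref{thm:gen main thm} gives $N\le 2$. For $n$ odd this yields $t\in\mathbb{Q}$, so $t\in\{0,\pm1\}$ by Theorem~\ref{thm:main thm}; for $n$ even it yields $t^{2}\in\mathbb{Q}$, whence $\cos(\pi\theta)^{2}=1/(1+t^{2})\in\mathbb{Q}$ and Corollary~\ref{thm:square cos} delivers $t\in\{0,\pm\tfrac1{\sqrt3},\pm1,\pm\sqrt3\}$. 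The single real obstacle in this scheme is the bound $N\le 2$; once Theorem~\ref{thm:gen main thm} supplies it, the remainder is the short parity/divisibility argument together with the elementary identity relating $\tan^{2}$ and $\cos^{2}$.
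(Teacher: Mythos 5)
Your proof is correct, and it rests on the same essential ingredients as the paper's — Theorem \ref{thm:main thm} and Corollary \ref{thm:square cos} as base cases, with Theorem \ref{thm:gen main thm} supplying the one nontrivial exclusion — but your bookkeeping is organized differently and, in fact, more carefully. The paper splits (for $n\ge 3$) into three cases according to whether $\cos(\pi\theta)$ and $\cos(\pi\theta)^{2}$ are rational, kills the doubly-irrational case by Theorem \ref{thm:gen main thm}, and then asserts the conclusion \eqref{eq:cos sin results}; this leaves implicit two points. First, to see that the doubly-irrational case really matches the hypothesis of Theorem \ref{thm:gen main thm}, one must pass to the minimal exponent $N$ with $\cos(\pi\theta)^{N}\in\mathbb{Q}$, since only minimality guarantees that \emph{all} lower powers are irrational. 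Second, the paper never explains the parity distinction — why, for odd $n$, the case $\cos(\pi\theta)\notin\mathbb{Q}$, $\cos(\pi\theta)^{2}\in\mathbb{Q}$ cannot occur, i.e.\ why $\pm\tfrac{1}{\sqrt2}$ and $\pm\tfrac{\sqrt3}{2}$ drop out of the odd list. Your minimal-exponent formulation makes both points explicit: minimality of $N$ produces exactly the configuration of the ``in particular'' clause of Theorem \ref{thm:gen main thm}, forcing $N\le 2$, and the divisibility $N\mid n$ (where your caveat $c\neq 0$ is exactly what legitimizes the Euclidean-division step) cleanly rules out $N=2$ when $n$ is odd. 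You also spell out the $\sin$ reduction via $\theta'=\tfrac12-\theta$ and the tangent reduction $\cos(\pi\theta)^{2}=1/(1+\tan(\pi\theta)^{2})$, which the paper compresses into ``one can prove similarly''. In short: same route and same key lemma, but your version closes the small expository gaps in the paper's three-case analysis.
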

We note that one can easily verify the above theorems using the fundamental properties of Galois theory as in the following sections. 
But we could find no proof of these facts in print, and hence it will be of some interest to write down the proofs of these facts.

\section{Preliminaries}

To prove Theorem\,\ref{thm:gen main thm} and Theorem\,\ref{thm:gen main thm 2}, we list some fundamental facts of the cyclotomic fields and Kummer extension in this section. 
First we mention the Galois group $\mathrm{Gal}(\mathbb{Q}(\zeta_{n})/\mathbb{Q})$ (see \cite{L}). 
\begin{lem}
The degree of the cyclotomic extension $\mathbb{Q}(\zeta_{n})$ over $\mathbb{Q}$ is $[\mathbb{Q}(\zeta_{n}):\mathbb{Q}]=\varphi (n):=|\{1\leq a\leq n \mid \mathrm{gcd}(a,n)=1\}|$ and its Galois group $\mathrm{Gal}(\mathbb{Q}(\zeta_{n})/\mathbb{Q})$ is 
\begin{equation}
\begin{array}{ccccccc}
(\mathbb{Z}/n\mathbb{Z})^{\times } & \simeq & \mathrm{Gal}(\mathbb{Q}(\zeta_{n})/\mathbb{Q}) & \curvearrowright & \mathbb{Q}(\zeta_{n}) & \rightarrow & \mathbb{Q}(\zeta_{n}) \\
\rotatebox{90}{$\in$} &  & \rotatebox{90}{$\in$} & & \rotatebox{90}{$\in$} & & \rotatebox{90}{$\in$} \\
c & \mapsto & \tau _{c} & \curvearrowright & \zeta_{n} & \mapsto & \tau _{c}(\zeta_{n}):=\zeta_{n} ^{c}
\end{array}. \nonumber 
\end{equation}
In particular $\mathrm{Gal}(\mathbb{Q}(\zeta_{n})/\mathbb{Q})$ is an abelian extension, and its subfields $L \supset \mathbb{Q}$ are Galois and abelian extensions over $\mathbb{Q}$.
\end{lem}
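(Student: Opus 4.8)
The plan is to follow the classical route through cyclotomic polynomials. First I would observe that $\mathbb{Q}(\zeta_{n})$ is the splitting field of $X^{n}-1$ over $\mathbb{Q}$, since every $n$th root of unity is a power of $\zeta_{n}$; as $X^{n}-1$ is separable in characteristic zero (it shares no root with its derivative $nX^{n-1}$), the extension $\mathbb{Q}(\zeta_{n})/\mathbb{Q}$ is Galois. Any $\sigma \in \mathrm{Gal}(\mathbb{Q}(\zeta_{n})/\mathbb{Q})$ permutes the roots of $X^{n}-1$ and, being a ring homomorphism, preserves the multiplicative order of $\zeta_{n}$; hence $\sigma(\zeta_{n})=\zeta_{n}^{c}$ for a unique residue $c\in(\mathbb{Z}/n\mathbb{Z})^{\times}$. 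The assignment $\sigma \mapsto c$ is then a well-defined injective group homomorphism $\mathrm{Gal}(\mathbb{Q}(\zeta_{n})/\mathbb{Q})\hookrightarrow(\mathbb{Z}/n\mathbb{Z})^{\times}$, whose inverse on the image is exactly the map $c\mapsto\tau_{c}$ appearing in the statement.

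The crux is to show that this injection is surjective, equivalently that $[\mathbb{Q}(\zeta_{n}):\mathbb{Q}]=\varphi(n)$, which I regard as the main obstacle. This reduces to the irreducibility over $\mathbb{Q}$ of the cyclotomic polynomial $\Phi_{n}(X)=\prod_{\mathrm{gcd}(a,n)=1}(X-\zeta_{n}^{a})$. I would prove it by the standard argument: let $f$ be the minimal polynomial of $\zeta_{n}$ over $\mathbb{Q}$, a monic factor of $\Phi_{n}$ lying in $\mathbb{Z}[X]$ by Gauss's lemma. It suffices to verify that whenever $\omega$ is a root of $f$ and $p$ is a prime with $p\nmid n$, the element $\omega^{p}$ is again a root of $f$; since every primitive $n$th root of unity $\zeta_{n}^{c}$ is obtained from $\zeta_{n}$ by successively raising to the prime factors of a representative $c$ with $\mathrm{gcd}(c,n)=1$ (all of which are coprime to $n$), this forces $f=\Phi_{n}$ and $\deg f=\varphi(n)$. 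The coprime-prime step is the delicate one: writing $\Phi_{n}=fg$ in $\mathbb{Z}[X]$, if $\omega^{p}$ were a root of $g$ then $\omega$ would be a root of $g(X^{p})$, so $f(X)\mid g(X^{p})$ in $\mathbb{Z}[X]$; reducing modulo $p$ and invoking the Frobenius identity $g(X^{p})\equiv g(X)^{p}\pmod{p}$ shows that $\bar{f}$ and $\bar{g}$ share a factor in $\mathbb{F}_{p}[X]$, so $\bar{\Phi}_{n}$ acquires a repeated root in characteristic $p$, contradicting the separability of $X^{n}-1$ modulo $p$, which holds precisely because $p\nmid n$.

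Finally I would read off the remaining assertions from the fundamental theorem of Galois theory. Having identified $\mathrm{Gal}(\mathbb{Q}(\zeta_{n})/\mathbb{Q})\cong(\mathbb{Z}/n\mathbb{Z})^{\times}$, abelianness is immediate. For an intermediate field $L$ with $\mathbb{Q}\subset L\subset\mathbb{Q}(\zeta_{n})$, the Galois correspondence attaches the subgroup $H=\mathrm{Gal}(\mathbb{Q}(\zeta_{n})/L)$; because $(\mathbb{Z}/n\mathbb{Z})^{\times}$ is abelian, every such $H$ is normal, so $L/\mathbb{Q}$ is Galois with $\mathrm{Gal}(L/\mathbb{Q})\cong G/H$ abelian. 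This yields the ``in particular'' clause and completes the proof.
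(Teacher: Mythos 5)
Your proof is correct: the reduction of the whole lemma to the irreducibility of $\Phi_{n}(X)$ via the injective homomorphism $\sigma\mapsto c$ into $(\mathbb{Z}/n\mathbb{Z})^{\times}$, the Dedekind argument for the coprime-prime step (with $f(X)\mid g(X^{p})$, the Frobenius identity $g(X^{p})\equiv g(X)^{p}\pmod{p}$, and separability of $X^{n}-1$ over $\mathbb{F}_{p}$ since $p\nmid n$), and the Galois-correspondence deduction that subfields are abelian over $\mathbb{Q}$ are all sound. The paper itself gives no proof of this lemma --- it is stated as a standard fact with a citation to Lang --- and your argument is precisely the classical proof found there, so you have simply supplied in full the details the paper chose to omit.
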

Under the following, let $\alpha$ be a positive rational number such that 
\begin{equation}
\label{eq:sufficient cond of alpha}
\alpha^{\frac{1}{n}},\ldots,\alpha^{\frac{n-1}{n}}\not\in \mathbb{Q}
\end{equation}
and $K:=\mathbb{Q}(\sqrt[n]{\alpha }, \zeta_{n})$. 
\begin{prop}
\label{thm:basic prop}
{\rm{(1)}} The binomial polynomial $x^{n}-\alpha $ is irreducible over $\mathbb{Q}$ and $[\mathbb{Q}(\sqrt[n]{\alpha }):\mathbb{Q}]=n$.\\
{\rm{(2)}} For any $n\geq 2$, we have $\sqrt[n]{\alpha } \not\in \mathbb{Q}(\zeta_{n})$.
\end{prop}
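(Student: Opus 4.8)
The plan is to handle the two parts in order, obtaining (2) as a short consequence of (1). For part (1), I would invoke the standard irreducibility criterion for binomials (as in Lang \cite{L}): for a field $k$ and $a\in k^{\times}$, the polynomial $x^{n}-a$ is irreducible over $k$ exactly when $a$ is not a $p$th power in $k$ for every prime $p\mid n$, subject to the additional proviso that $a\notin -4k^{4}$ when $4\mid n$. Since $\alpha$ is a \emph{positive} rational and $-4\mathbb{Q}^{4}$ consists only of negative numbers, the proviso is automatically satisfied and can be discarded at once. It therefore suffices to rule out $\alpha$ being a $p$th power for each prime divisor $p$ of $n$.

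This is the one genuinely substantive point, and it is where the standing hypothesis \eqref{eq:sufficient cond of alpha} enters. Suppose, toward a contradiction, that $\alpha=\beta^{p}$ for some $\beta\in\mathbb{Q}$ and some prime $p\mid n$. Put $m:=n/p$; then $1\le m\le n-1$, since $p\ge 2$ forces $m\le n/2<n$ while $m\ge 1$. Now $\alpha^{m/n}=\alpha^{1/p}=|\beta|\in\mathbb{Q}$ (the sign is immaterial: for $p$ odd, $\beta^{p}=\alpha>0$ forces $\beta>0$, and for $p$ even, $\beta^{p}=|\beta|^{p}$), which contradicts \eqref{eq:sufficient cond of alpha}. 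Hence $x^{n}-\alpha$ is irreducible over $\mathbb{Q}$; being monic with $\sqrt[n]{\alpha}$ as a root, it is the minimal polynomial of $\sqrt[n]{\alpha}$, so $[\mathbb{Q}(\sqrt[n]{\alpha}):\mathbb{Q}]=\deg(x^{n}-\alpha)=n$.

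For part (2), I would argue by a degree count. Assume to the contrary that $\sqrt[n]{\alpha}\in\mathbb{Q}(\zeta_{n})$, which yields a tower $\mathbb{Q}\subseteq\mathbb{Q}(\sqrt[n]{\alpha})\subseteq\mathbb{Q}(\zeta_{n})$. By multiplicativity of degrees in towers, $[\mathbb{Q}(\sqrt[n]{\alpha}):\mathbb{Q}]$ divides $[\mathbb{Q}(\zeta_{n}):\mathbb{Q}]$; combining part (1) with the Lemma on the cyclotomic degree, this says $n\mid\varphi(n)$. But for every $n\ge 2$ one has $1\le\varphi(n)<n$, so $\varphi(n)$ is a positive integer strictly less than $n$ and cannot be a multiple of $n$. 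This contradiction gives $\sqrt[n]{\alpha}\notin\mathbb{Q}(\zeta_{n})$. The argument is uniform and in particular covers $n=2$, where $\mathbb{Q}(\zeta_{2})=\mathbb{Q}$ and the statement reduces directly to the hypothesis $\sqrt{\alpha}\notin\mathbb{Q}$.

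The main obstacle, modest as it is, lies in the careful use of the binomial irreducibility criterion in part (1): one must remember the $4\mid n$ exceptional case and notice that positivity of $\alpha$ eliminates it, and one must correctly translate the hypothesis \eqref{eq:sufficient cond of alpha} into the non-$p$th-power condition. Once $[\mathbb{Q}(\sqrt[n]{\alpha}):\mathbb{Q}]=n$ is established, part (2) is immediate from the elementary inequality $\varphi(n)<n$.
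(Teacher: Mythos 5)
Your proposal is correct, but in part (1) it takes a genuinely different route from the paper. The paper does not invoke the binomial (Vahlen--Capelli-type) irreducibility criterion; instead it gives a self-contained factorization argument: assuming $x^{n}-\alpha = f_{I}(x)f_{J}(x)$ over $\mathbb{Q}$ with $[n] = I \sqcup J$ and both factors nontrivial, it notes that the constant term of $f_{J}$ is $(-1)^{|J|}\alpha^{|J|/n}\prod_{j\in J}\zeta_{n}^{j}$, that the product of roots of unity must be real of modulus $1$, hence $\pm 1$, so the constant term equals $\pm\alpha^{|J|/n}$, which is irrational by \eqref{eq:sufficient cond of alpha} since $1\le |J|\le n-1$ --- a contradiction. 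The trade-off is instructive: the paper's argument is elementary and self-contained but uses \eqref{eq:sufficient cond of alpha} at full strength (it needs irrationality of $\alpha^{|J|/n}$ for whatever cardinality $|J|$ the hypothetical factor happens to have), whereas you use the hypothesis only at the exponents $m = n/p$ for primes $p \mid n$, so your route implicitly shows the a priori weaker condition ``$\alpha^{1/p}\notin\mathbb{Q}$ for every prime $p\mid n$'' already suffices for irreducibility; the price is importing a nontrivial theorem from Lang \cite{L} and the obligation to check the $4\mid n$ exceptional case $a\in -4k^{4}$, which you correctly dispatch using positivity of $\alpha$, and your handling of the sign in $\alpha=\beta^{p}$ is also careful and correct. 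For part (2) your argument and the paper's essentially coincide: both compare $[\mathbb{Q}(\sqrt[n]{\alpha}):\mathbb{Q}]=n$ against $[\mathbb{Q}(\zeta_{n}):\mathbb{Q}]=\varphi(n)<n$ for $n\ge 2$, the paper deriving $\varphi(n)<n$ from the Euler product $\varphi(n)=n\prod_{p\mid n}\bigl(1-\frac{1}{p}\bigr)$ and using the degree inequality, while you use divisibility of degrees in a tower --- an immaterial difference.
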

\begin{proof}
{\rm{(1)}} 
We consider the following decomposition of the binomial polynomial: 
\begin{align}
\label{eq:binomial decompose}
x^{n}-\alpha 
   &=
   \prod_{i=1}^{n}(x-\sqrt[n]{\alpha }\zeta_{n}^{i})
   =
   f_{I}(x)f_{J}(x)  
\end{align}
where $I, J$ are subsets of $[n]:=\{1,2,\ldots, n\}$ such that
\begin{align}
\label{eq:disjoint I J}
[n]=I \sqcup J, \quad I\not=\emptyset , \quad J\not=\emptyset , \quad I\cap J=\emptyset
\end{align}
and  
\begin{align}
f_{I}(x)
   &:=
   \prod_{i \in I}(x-\sqrt[n]{\alpha }\zeta_{n}^{i})
   =
   x^{|I|}+\cdots +(-1)^{|I|}\alpha ^{\frac{|I|}{n}}\prod_{i \in I}\zeta_{n}^{i} \in \mathbb{Q}[x], \nonumber \\
f_{J}(x)
   &:=
   \prod_{j \in J}(x-\sqrt[n]{\alpha }\zeta_{n}^{j})
   =
   x^{|J|}+\cdots +(-1)^{m}\alpha ^{\frac{|J|}{n}}\prod_{j \in J}\zeta_{n}^{j} \in \mathbb{Q}[x]. \nonumber 
\end{align}
If $x^{n}-\alpha $ is reducible over $\mathbb{Q}$, then there exist $I, J \subset [n]$ such that satisfy the condition (\ref{eq:disjoint I J}) and $f_{I}(x), f_{J}(x)$ are rational coefficient polynomials. 
Since the constant term of $f_{J}(x)$ is real, the product 
$
\prod_{j \in J}\zeta_{n}^{j} 
$
is real and 
$
\left|\prod_{j \in J}\zeta_{n}^{j} \right|=1.
$
Thus we have 
$$
\prod_{j \in J}\zeta_{n}^{j}=\pm 1.
$$
From the assumption $\alpha ^{\frac{|J|}{n}} \not\in \mathbb{Q}$, the constant term of $f_{J}(x)$
$$
(-1)^{m}\alpha ^{\frac{m}{n}}\prod_{j \in J}\zeta_{n}^{j}
   =
   \pm (-1)^{m}\alpha ^{\frac{m}{n}}
$$
is irrational. 
It is a contradiction.\\
{\rm{(2)}} Assume $\sqrt[n]{\alpha } \in \mathbb{Q}(\zeta_{n})$. 
Then we have the contradiction 
$$
n=[\mathbb{Q}(\sqrt[n]{\alpha }):\mathbb{Q}]\leq [\mathbb{Q}(\zeta_{n}):\mathbb{Q}]=\varphi (n)=n\prod_{p\mid n}\left(1-\frac{1}{p}\right)<n. 
$$
\end{proof}

\begin{lem}
\label{thm:Kummer Galois}
{\rm{(1)}} If $n=p$ is a odd prime, then the binomial type polynomial $x^{p}-\alpha $ is irreducible over $\mathbb{Q}(\zeta_{p})$ and $[K:\mathbb{Q}(\zeta_{p})]=p$. Its Galois group $\mathrm{Gal}(K/\mathbb{Q})$ is
\begin{equation}
\begin{array}{ccccccc}
\mathbb{Z}/p\mathbb{Z} \rtimes (\mathbb{Z}/p\mathbb{Z})^{\times } & \simeq & \mathrm{Gal}(K/\mathbb{Q}) & \curvearrowright & K & \rightarrow & K \\
\rotatebox{90}{$\in$} &  & \rotatebox{90}{$\in$} & & \rotatebox{90}{$\in$} & & \rotatebox{90}{$\in$} \\
(1,1) & \mapsto & \sigma & \curvearrowright & \sqrt[p]{\alpha } & \mapsto & \sigma (\sqrt[p]{\alpha }):=\zeta _{p}\sqrt[p]{\alpha } \\
 & & & & \zeta_{n} & \mapsto & \sigma (\zeta_{p} ):=\zeta _{p} \\
(0,c) & \mapsto & \tau_{c} & \curvearrowright & \sqrt[p]{\alpha } & \mapsto & \tau_{c} (\sqrt[p]{\alpha } ):=\sqrt[p]{\alpha } \\
 & & & & \zeta_{p} & \mapsto & \tau_{c} (\zeta_{p} ):=\zeta _{p}^{c}
\end{array}. \nonumber 
\end{equation}
In particular, $\tau_{c} \sigma =\sigma ^{c}\tau_{c}$ and for $n\geq 3$ the Galois group $\mathrm{Gal}(K/\mathbb{Q})$ is non-abelian.\\
{\rm{(2)}} For any $n\geq 3$, the Galois group $\mathrm{Gal}(K/\mathbb{Q})$ is non-abelian.
\end{lem}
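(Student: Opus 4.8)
The plan is to treat the two parts in turn, with part~(1) producing the non-abelian building block that part~(2) exploits through the Galois correspondence. For part~(1), where $n=p$ is an odd prime, I would first pin down all the degrees. Proposition\,\ref{thm:basic prop}\,(1) gives $[\mathbb{Q}(\sqrt[p]{\alpha}):\mathbb{Q}]=p$, and the cyclotomic degree formula gives $[\mathbb{Q}(\zeta_p):\mathbb{Q}]=\varphi(p)=p-1$. Since both $\mathbb{Q}(\sqrt[p]{\alpha})$ and $\mathbb{Q}(\zeta_p)$ sit inside $K$, both $p$ and $p-1$ divide $[K:\mathbb{Q}]$, and as $\gcd(p,p-1)=1$ this forces $p(p-1)\mid[K:\mathbb{Q}]$. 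On the other hand $K=\mathbb{Q}(\zeta_p)(\sqrt[p]{\alpha})$ is generated by a root of $x^{p}-\alpha$, so $[K:\mathbb{Q}(\zeta_p)]\le p$ and hence $[K:\mathbb{Q}]\le p(p-1)$. Combining these gives $[K:\mathbb{Q}]=p(p-1)$ and $[K:\mathbb{Q}(\zeta_p)]=p$, which is precisely the asserted irreducibility of $x^{p}-\alpha$ over $\mathbb{Q}(\zeta_p)$.

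Next, $K$ is the splitting field of the separable polynomial $x^{p}-\alpha$, so $K/\mathbb{Q}$ is Galois of order $p(p-1)$. The irreducibility just established lets me define $\sigma$ as the $\mathbb{Q}(\zeta_p)$-automorphism with $\sqrt[p]{\alpha}\mapsto\zeta_p\sqrt[p]{\alpha}$; it has order $p$ and generates $\mathrm{Gal}(K/\mathbb{Q}(\zeta_p))$. Because $[K:\mathbb{Q}(\sqrt[p]{\alpha})]=p-1=\deg\Phi_p$ and $K=\mathbb{Q}(\sqrt[p]{\alpha})(\zeta_p)$, the restriction $\mathrm{Gal}(K/\mathbb{Q}(\sqrt[p]{\alpha}))\to\mathrm{Gal}(\mathbb{Q}(\zeta_p)/\mathbb{Q})=(\mathbb{Z}/p\mathbb{Z})^{\times}$ is an isomorphism, which furnishes the automorphisms $\tau_c$ fixing $\sqrt[p]{\alpha}$ with $\zeta_p\mapsto\zeta_p^{c}$. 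Since $\mathbb{Q}(\zeta_p)/\mathbb{Q}$ is Galois, $\langle\sigma\rangle=\mathrm{Gal}(K/\mathbb{Q}(\zeta_p))$ is normal; as $|\langle\sigma\rangle|=p$ and $|\{\tau_c\}|=p-1$ are coprime we get $\langle\sigma\rangle\cap\{\tau_c\}=\{1\}$ and $\langle\sigma\rangle\{\tau_c\}=\mathrm{Gal}(K/\mathbb{Q})$, i.e. the internal semidirect product $\mathbb{Z}/p\mathbb{Z}\rtimes(\mathbb{Z}/p\mathbb{Z})^{\times}$. The relation $\tau_c\sigma=\sigma^{c}\tau_c$ I would verify by evaluating both sides on the generators: $\tau_c\sigma(\sqrt[p]{\alpha})=\tau_c(\zeta_p\sqrt[p]{\alpha})=\zeta_p^{c}\sqrt[p]{\alpha}=\sigma^{c}\tau_c(\sqrt[p]{\alpha})$, while both send $\zeta_p\mapsto\zeta_p^{c}$. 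Since $p-1\ge 2$ there is a $c\ne 1$, and then $\sigma^{c}\ne\sigma$ exhibits non-commutativity.

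For part~(2), for general $n\ge 3$ I would \emph{not} analyse $\mathrm{Gal}(K/\mathbb{Q})$ directly, since $x^{n}-\alpha$ need not remain irreducible over $\mathbb{Q}(\zeta_n)$ and the group becomes a twisted affine group; instead I would locate a non-abelian Galois sub-extension, using that a quotient of an abelian group is abelian. If $n$ has an odd prime divisor $p$, then $\alpha^{1/p}\notin\mathbb{Q}$ — otherwise $\alpha^{(n/p)/n}=\alpha^{1/p}\in\mathbb{Q}$ with $1\le n/p\le n-1$ would contradict the hypothesis on $\alpha$ — so condition (\ref{eq:sufficient cond of alpha}) holds with $n$ replaced by $p$, and part~(1) makes $\mathrm{Gal}(\mathbb{Q}(\zeta_p,\sqrt[p]{\alpha})/\mathbb{Q})$ non-abelian. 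Because $\zeta_p\in\mathbb{Q}(\zeta_n)$ and $\sqrt[p]{\alpha}=(\sqrt[n]{\alpha})^{n/p}\in K$, this field is a Galois sub-extension of $K/\mathbb{Q}$, whence $\mathrm{Gal}(K/\mathbb{Q})$ maps onto a non-abelian group and is itself non-abelian. If instead $n$ is a power of $2$, then $n\ge 4$ and $4\mid n$; the same arithmetic check gives $\alpha^{1/4},\alpha^{1/2},\alpha^{3/4}\notin\mathbb{Q}$, so $\mathbb{Q}(i,\sqrt[4]{\alpha})\subseteq K$ is Galois over $\mathbb{Q}$ of degree $8$, and the Kummer automorphism $\sigma:\sqrt[4]{\alpha}\mapsto i\sqrt[4]{\alpha}$ together with complex conjugation $\tau$ satisfy $\tau\sigma(\sqrt[4]{\alpha})=-i\sqrt[4]{\alpha}\ne i\sqrt[4]{\alpha}=\sigma\tau(\sqrt[4]{\alpha})$, so this quotient of $\mathrm{Gal}(K/\mathbb{Q})$ is non-abelian.

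The main obstacle is organizational rather than computational: for composite $n$ one cannot copy part~(1) verbatim, so the essential idea is that non-abelianness is inherited from Galois quotients and that every $n\ge 3$ contains, compatibly with (\ref{eq:sufficient cond of alpha}), either an odd prime $p$ or the exponent $4$. The one genuinely necessary (if small) step is checking that (\ref{eq:sufficient cond of alpha}) descends to these smaller exponents; after that the argument is bookkeeping with the fundamental theorem of Galois theory.
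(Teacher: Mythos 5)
Your proposal is correct, but part (1) takes a genuinely different route from the paper. The paper proves irreducibility of $x^{p}-\alpha$ over $\mathbb{Q}(\zeta_{p})$ by rerunning its factorization argument (\ref{eq:binomial decompose}): in any nontrivial monic factorization $f_{I}(x)f_{J}(x)$ with coefficients in $\mathbb{Q}(\zeta_{p})$, the constant term of a factor forces $\alpha^{\frac{|J|}{p}}\in\mathbb{Q}(\zeta_{p})$, and since $\gcd(|J|,p)=1$ a B\'ezout relation then puts $\sqrt[p]{\alpha}$ itself in $\mathbb{Q}(\zeta_{p})$, contradicting Proposition\,\ref{thm:basic prop}\,(2). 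You instead count degrees: $p=[\mathbb{Q}(\sqrt[p]{\alpha}):\mathbb{Q}]$ and $p-1=[\mathbb{Q}(\zeta_{p}):\mathbb{Q}]$ both divide $[K:\mathbb{Q}]$, they are coprime, and $[K:\mathbb{Q}(\zeta_{p})]\le p$, forcing $[K:\mathbb{Q}]=p(p-1)$. Both arguments are sound; note that your coprimality trick is genuinely prime-specific (it rests on $\gcd(p,\varphi(p))=1$, and indeed fails for $n=8$, consistent with the paper's Remark that $x^{8}-2$ is reducible over $\mathbb{Q}(\zeta_{8})$), whereas the paper's constant-term mechanism is the same one powering Proposition\,\ref{thm:basic prop} and makes visible exactly where compositeness breaks the lemma. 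Your version also buys more than the paper's: the published proof of (1) establishes only irreducibility and takes the semidirect-product description for granted, while you actually construct $\sigma$ and $\tau_{c}$, obtain the splitting from the normality of $\mathrm{Gal}(K/\mathbb{Q}(\zeta_{p}))$ together with coprime orders, and verify $\tau_{c}\sigma=\sigma^{c}\tau_{c}$ on generators. For (2) you follow the paper's strategy exactly (a non-abelian Galois subextension $\mathbb{Q}(\sqrt[p]{\alpha},\zeta_{p})$, or $\mathbb{Q}(\sqrt[4]{\alpha},\zeta_{4})$ in the $2$-power case, forces the quotient and hence $\mathrm{Gal}(K/\mathbb{Q})$ to be non-abelian), but you supply two steps the paper leaves implicit: that hypothesis (\ref{eq:sufficient cond of alpha}) descends from $n$ to $p$ (resp.\ to $4$) via $\alpha^{\frac{k}{p}}=\alpha^{\frac{kn/p}{n}}$, and --- since part (1) covers only odd primes --- a direct check, using the Kummer automorphism $\sqrt[4]{\alpha}\mapsto \zeta_{4}\sqrt[4]{\alpha}$ against complex conjugation, that $\mathrm{Gal}(\mathbb{Q}(\sqrt[4]{\alpha},\zeta_{4})/\mathbb{Q})$ is non-abelian.
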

\begin{proof}
{\rm{(1)}} We consider the factorization (\ref{eq:binomial decompose}) $x^{p}-\alpha =f_{I}(x)f_{J}(x)$ again. 
By 
$
\mathrm{gcd}(|I|,p)=1
$ 
and 
$
\mathrm{gcd}(|J|,p)=1,
$ 
$\mathbb{Q}(\alpha ^{\frac{|I|}{p}})$ and $\mathbb{Q}(\alpha ^{\frac{|J|}{p}})$ contain $\sqrt[p]{\alpha }$. 
Hence, from Proposition \ref{thm:basic prop} {\rm{(2)}}, $\alpha ^{\frac{|I|}{p}}$ and $\alpha ^{\frac{|J|}{p}}$ are not contained in $\mathbb{Q}(\zeta_{p})$. 
Therefore the binomial polynomial $x^{p}-\alpha $ is irreducible over $\mathbb{Q}(\zeta_{p})$ and $[K:\mathbb{Q}(\zeta_{p})]=p$.\\
{\rm{(2)}} When a odd prime $p$ divides $n$, $K$ contains a non-abelian Galois extension $\mathbb{Q}(\sqrt[p]{\alpha },\zeta_{p})$ over $\mathbb{Q}$, so the Galois group $\mathrm{Gal}(K/\mathbb{Q})$ is non-abelian. 
If $n=2^{m}$ $(m\geq 2)$, then $K$ contains a non-abelian Galois extension $\mathbb{Q}(\sqrt[4]{\alpha },\zeta_{4})$ over $\mathbb{Q}$ and $\mathrm{Gal}(K/\mathbb{Q})$ is also non-abelian.
\end{proof}
\begin{rem}
Lemma \ref{thm:Kummer Galois} {\rm{(1)}} is not true in general. 
For example, when $n=8$ and $\alpha =2$ the polynomial $x^{8}-2$ is reducible over $\mathbb{Q}(\zeta_{8})$ even though $2^{\frac{1}{8}},\ldots,2^{\frac{7}{8}}$ are irrational. 
In fact
$$
x^{8}-2
   =
   (x^{4}-\sqrt{2})(x^{4}+\sqrt{2})
   =
   (x^{4}-\zeta_{8}-\zeta_{8}^{-1})(x^{4}+\zeta_{8}+\zeta_{8}^{-1}).
$$
\end{rem}

\section{Proof of Theorem\,\ref{thm:gen main thm}}
Assume there exists $N\geq 3$, $\alpha \in \mathbb{Q}_{>0}$ and a positive integer $m$ such that $\alpha^{\frac{1}{N}},\ldots,\alpha^{\frac{N-1}{N}}\not\in \mathbb{Q}$ and 
$$
\sqrt[N]{\alpha }
   \in 
   \mathbb{Q}(\zeta_{m}).
$$
Then 
$$
\mathbb{Q}(\sqrt[N]{\alpha })
   \subset 
   \mathbb{Q}(\zeta_{m}).
$$
Although $\mathbb{Q}(\sqrt[N]{\alpha })$ is not a Galois extension over $\mathbb{Q}$, $K$ is a Galois extension over $\mathbb{Q}$ and  
$$
K
   \subset 
   \mathbb{Q}(\zeta_{m},\zeta_{N})
   \subset 
   \mathbb{Q}(\zeta_{mN}).
$$
Further the field $K$ is a subfield of $\mathbb{Q}(\zeta_{mN})$ and the Galois group $\mathrm{Gal}(K/\mathbb{Q})$ is a normal subgroup of $\mathrm{Gal}(\mathbb{Q}(\zeta_{mN})/\mathbb{Q})$:
$$
\mathrm{Gal}(K/\mathbb{Q}) \triangleleft \mathrm{Gal}(\mathbb{Q}(\zeta_{mN})/\mathbb{Q})\simeq (\mathbb{Z}/mN\mathbb{Z})^{\times }.
$$
However the Galois group $\mathrm{Gal}(K/\mathbb{Q})$ is non-abelian. 
It is a contradiction. 
Then for any positive integer $m$, 
\begin{equation}
\label{eq:irrationality of cos}
\sqrt[N]{\alpha }\not\in \mathbb{Q}(\zeta_{m}).
\end{equation}

For the above $N\geq 3$ and positive rational number $\alpha \in \mathbb{Q}_{>0}$, assume there exists $\theta \in \mathbb{Q}$ such that 
$$
\cos{(\pi \theta )}=\sqrt[N]{\alpha }.
$$
By $\theta \in \mathbb{Q}$, there exists a positive integer $m$ such that 
$$
\sqrt[N]{\alpha }
   =
   \cos{(\pi \theta )}
   \in 
   \mathbb{Q}(\zeta_{m}).
$$
For $N\geq 3$, it is contrary to (\ref{eq:irrationality of cos}). 
For $\tan{(\pi \theta )}$, one can prove similarly.
\begin{rem}
From Theorem\,\ref{thm:gen main thm}, there is no cyclotomic field over $\mathbb{Q}$ containing $n$th root of a positive rational number $\alpha $ with $\sqrt[n]{\alpha } \not\in \mathbb{Q}$, for any $n\geq 3$. 
On the other hand, from Gauss sum's formulas \cite{BEW}
\begin{align}
\sum_{k=0}^{m-1}\zeta _{m}^{k^{2}}
   &=
   \frac{1+\sqrt{-1}}{2}(1+(-\sqrt{-1})^{m})\sqrt{m}
   =\begin{cases}
   (1+\sqrt{-1})\sqrt{m} & (m\equiv 0 \,\mathrm{mod}\,4) \\
   \sqrt{m} & (m\equiv 1 \,\mathrm{mod}\,4) \\
   0 & (m\equiv 2 \,\mathrm{mod}\,4) \\
   \sqrt{-1}\sqrt{m} & (m\equiv 3 \,\mathrm{mod}\,4)
   \end{cases}, \nonumber \\
\zeta_{4}
   &=
   \sqrt{-1}, \quad 
\zeta_{8}+\zeta_{8}^{-1}
   =
   \sqrt{2}, \nonumber 
\end{align} 
there exists a positive integer $m$ such that $\sqrt{\alpha } \in \mathbb{Q}(\zeta_{m})$, for any $\alpha \in \mathbb{Q}$.  
\end{rem}

\section{Proof of Theorem\,\ref{thm:gen main thm 2}}
Since the proof of (\ref{eq:tan result}) is similar to (\ref{eq:cos sin results}), we only prove (\ref{eq:cos sin results}). 
The cases of $n=1$ and $n=2$ are Theorem\,\ref{thm:main thm} and Corollary\,\ref{thm:square cos} respectively. 
For $n\geq 3$, the following three cases are possible: \\
\indent
 1) $\cos{(\pi \theta )}\in \mathbb{Q}$ and $\cos{(\pi \theta )}^{2}\in \mathbb{Q}$, \\
\indent
 2) $\cos{(\pi \theta )}\not\in \mathbb{Q}$ and $\cos{(\pi \theta )}^{2}\in \mathbb{Q}$, \\
\indent
 3) $\cos{(\pi \theta )}\not\in \mathbb{Q}$ and $\cos{(\pi \theta )}^{2}\not\in \mathbb{Q}$.\\
In the case of 1), from Theorem\,\ref{thm:main thm} and Corollary\,\ref{thm:square cos}, the possible values of $\cos{(\pi \theta )}$ (or $\sin{(\pi \theta )}$) are 
$
0,\,\pm \frac{1}{2}, \,\pm1. 
$
Similarly, for the case of 2), the possible values of $\cos{(\pi \theta )}$ are 
$
\pm \frac{1}{\sqrt{2}}, \,\, \pm \frac{\sqrt{3}}{2}. 
$
Finally, the case of 3) is impossible from Theorem\,\ref{thm:gen main thm} {\rm{(1)}}. 
Then we obtain the conclusion (\ref{eq:cos sin results}).

\section*{Acknowledgement}
We would like to thank Professor Takashi Taniguchi (Kobe University) for his comments on cyclotomic and Kummer extensions. 


\bibliographystyle{amsplain}

\noindent 
Department of Mathematics, Graduate School of Science, Kobe University, \\
1-1, Rokkodai, Nada-ku, Kobe, 657-8501, JAPAN\\
E-mail: g-shibukawa@math.kobe-u.ac.jp

\end{document}